\newtheorem{thm}{Theorem}
\newtheorem{theorem}{Theorem}[section]
\newtheorem{lem}{Lemma}
\newtheorem{lemma}[theorem]{Lemma}
\newtheorem{coro}[theorem]{Corollary}
\newcommand{\al}{\alpha}
\newcommand{\om}{\omega}
\newcommand{\del}{\delta}
\newcommand{\lf}{\big{\lfloor}}
\newcommand{\rf}{\big{\rfloor}}
\newcommand{\Z}{\mbox{$\mathbb Z$}}
\newcommand{\R}{\mbox{$\mathbb R$}}     
\begin{document}
\title{Irreducibility of generalized Hermite-Laguerre Polynomials II}
\author{Shanta Laishram}
\email{shanta@iiserbhopal.ac.in, shantalaishram@gmail.com}
\address{Indian Institute of Science Education and Research, \\
ITI Campus (Gas Rahat) Building, Govindpura, Bhopal - 462 023, India}
\author[T. N. Shorey]{T. N. Shorey}
\email{shorey@math.tifr.res.in}
\address{School of Mathematics\\
Tata Institute of Fundamental Research\\
Homi Bhabha Road, Mumbai 400005, India}
\thanks{Submitted on \today}
\begin{abstract}
In this paper, we show that for each $n\geq 1$, the generalised Hermite-Laguerre Polynomials
$G_{\frac{1}{4}}$ and $G_{\frac{3}{4}}$ are either irreducible or linear
polynomial times an irreducible polynomial of degree $n-1$.
\end{abstract}

\maketitle

\section{Introduction}

Let $n$ and $1\le \al<d$ be positive integers with gcd$(\al, d)=1$. Let
$q=\frac{\al}{d}$ and let
\begin{align*}
(\al)_j=\al(\al+d)\cdots (\al +(j-1)d)
\end{align*}
for non negative integer $j$. We define
\begin{align*}
F(x):=F_{q}(x)=a_n\frac{d^nx^n}{(\al)_n}+
a_{n-1}\frac{d^{n-1}x^{n-1}}{(\al)_{n-1}}+\cdots +
a_1\frac{dx}{(\al)_{1}}+a_0
\end{align*}
where $a_0, a_1, \cdots a_n\in \Z$ and $P(|a_0a_n|)\leq 2$. Here $P(\nu)$ is the
maximum prime divisor for $|\nu|>1$ and $P(1)=P(-1)=1$. We put
\begin{align*}
G(x):=G_q(x)=&(\al)_{n}F_q(\frac{x}{d})\\
=&a_nx^n+a_{n-1}(\al +(n-1)d)x^{n-1}+\cdots + \\
&a_1\left(\prod^{n-1}_{i=1}(\al +id)\right)x+a_0
\left(\prod^{n-1}_{i=0}(\al +id)\right).
\end{align*}
Schur \cite{schur} proved that $G_{\frac{1}{2}}$ with $|a_0|=|a_n|=1$ is irreducible.
Laishram and Shorey \cite{st1/3} showed that $G_{\frac{1}{3}}$ and $G_{\frac{2}{3}}$ are
either irreducible or linear polynomial times an irreducible polynomial of degree $n-1$
whenever $|a_0|=|a_n|=1$. For an account of earlier results, we refer to \cite{stirred}
and \cite{fifile}. We prove

\begin{thm}\label{1/4}
For each $n$, the polynomials $G_{\frac{1}{4}}$ and $G_{\frac{3}{4}}$ 
are either irreducible or linear polynomial times an irreducible polynomial of degree $n-1$.
\end{thm}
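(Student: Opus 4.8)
The plan is to follow the Schur–Coleman strategy that worked for $G_{1/2}$ and $G_{1/3}$, using Newton polygons with respect to small primes together with sharp estimates on the prime factorization of the relevant products $(\al)_j = \prod_{i=0}^{j-1}(\al+id)$. For $q=\frac14$ or $q=\frac34$ we have $d=4$, so $\al+id$ runs through an arithmetic progression of odd integers; the only small primes that can divide many of these terms are $3, 5, 7, \dots$, while $2$ never divides them. Since $P(|a_0a_n|)\le 2$, the leading and trailing coefficients contribute only powers of $2$, which is exactly what makes the $2$-adic (and more generally $p$-adic) Newton polygon so informative. Suppose $G_q = g\,h$ with $\deg g = k$, $1\le k\le n-1$. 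The first step is to recall from the earlier papers the reduction showing that it suffices to rule out a factor of degree $k$ with $2\le k\le n/2$, and to set up the associated polynomial $F_q$ whose Newton polygon at a prime $p$ has vertices governed by $v_p((\al)_j)$.

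Second, I would carry out the Newton-polygon analysis prime by prime. For a prime $p\nmid d$, the $p$-adic valuation of $(\al)_j$ grows like $\lfloor j/p\rfloor + \lfloor j/p^2\rfloor + \cdots$ shifted by the position of the first multiple of $p$ in the progression $\al, \al+d, \dots$; the key classical input (going back to Schur, refined by Shorey and collaborators) is that if a prime $p > k$ divides one of $\al+id$ for $i$ in a suitable range, then the Newton polygon at $p$ forces any factor to have degree a multiple related to that prime, and playing two such primes against each other forces $k=0$ or $k=n$. Concretely, I expect to invoke results on the greatest prime factor of $\prod_{i=0}^{j-1}(\al+id)$ and on primes in short intervals (Sylvester–Erdős type theorems) to guarantee, for $k$ in the critical range, the existence of a prime $p$ with $k<p$ dividing exactly one factor $\al+id$ with $n-k\le i \le n-1$ or $0\le i\le k-1$; such a prime contributes a single slope to the Newton polygon that no factor of degree $k$ can accommodate. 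This gives a contradiction except for finitely many small $n$ and small $k$, where I would also need to account for the possibility that the exceptional linear factor absorbs the obstruction — that is exactly why the conclusion allows ``linear polynomial times an irreducible polynomial of degree $n-1$'' rather than outright irreducibility.

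Third, the finitely many residual cases (small $n$, and configurations where the prime estimates are not yet strong enough) must be handled by hand or by explicit computation: one lists the possible $(\al, n, k)$, writes down the finitely many Newton polygons at $p\in\{3,5,7,11,\dots\}$, and checks directly that either $G_q$ is irreducible or any nontrivial factorization has a linear factor. Here the condition $P(|a_0a_n|)\le 2$ is used once more to bound the shape of the polygon at $p=2$ and to limit the endpoints. Throughout, the arithmetic of $d=4$ should make the estimates slightly cleaner than the $d=3$ case, since we gain the complete absence of the prime $2$ from the product.

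The hard part will be the second step: obtaining prime-existence statements sharp enough that the Newton-polygon obstruction applies for \emph{all} $k$ in $2\le k\le n/2$ with only a controlled, explicitly enumerable list of exceptions. This is where the analogues of the Sylvester–Erdős theorem for arithmetic progressions modulo $4$, together with careful bookkeeping of which factor $\al+id$ a given prime can divide, do the real work; getting the ranges to line up so that the leftover cases are genuinely finite (and then disposing of them) is the crux, just as it was in \cite{st1/3}.
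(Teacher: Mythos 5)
Your outline is essentially the paper's argument: the Newton-polygon obstruction you describe is packaged there as a single lemma (a prime $p>d$ with $p\ge\min(2k,d(d-1))$ dividing $\prod_{j=1}^{k}(\al+(n-j)d)$ but not $\prod_{j=1}^{k}(\al+(j-1)d)$ rules out a factor of degree $k$), and the prime-existence step is carried out exactly as you anticipate, via greatest-prime-factor and $\omega$-counting bounds for $m(m+4)\cdots(m+4(k-1))$ with $m=\al+4(n-k)$, using Ramar\'e--Rumely estimates for $\theta(\nu,4,l)$, Lehmer's tables and prime-gap computations for small $k$, and a direct check of the residual cases $k=2$, $m\in\{21,45\}$. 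The step you flag as ``the hard part'' is indeed where all the work lies, and your proposal leaves it unexecuted, but the strategy is the right one.
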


For Theorem \ref{1/4}, we prove the following lemma in Section 2.

\begin{lem}\label{mainupdate}
Let $1\le k\le \frac{n}{2}$. Suppose there is a prime $p$ satisfying
\begin{align*}
p>d, p\ge \min(2k, d(d-1))
\end{align*}
and
\begin{align}\label{condak}
p|\prod^{k}_{j=1}(\al +(n-j)d), \ \ p\nmid \prod^{k}_{j=1}(\al +(j-1)d).
\end{align}
Then $G(x)$ has no factor of degree $k$.
\end{lem}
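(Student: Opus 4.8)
The plan is to argue by contradiction in the spirit of Schur's method, converting the prime $p$ into a forbidden $p$-adic divisibility pattern. Suppose $G$ has a factor of degree $k$. By Gauss' lemma write $G=gh$ with $g,h\in\Z[x]$, $\deg g=k$, $\deg h=n-k$, and note $n-k\ge k$ since $k\le n/2$. Write $G(x)=\sum_{j=0}^{n}b_jx^j$, $g(x)=\sum_{j=0}^{k}c_jx^j$, $h(x)=\sum_{j=0}^{n-k}h_jx^j$. From the explicit shape of $G$ one reads off $b_j=a_j\prod_{i=j}^{n-1}(\al+id)$; in particular $b_n=a_n$ and $b_0=a_0(\al)_n$. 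Since $1\le\al<d$ gives $d\ge 2$, the hypothesis $p>d$ yields $p\ge 3$, so $P(|a_0a_n|)\le 2$ forces $p\nmid a_0a_n$; hence $p\nmid b_n$, while $v_p(b_0)=v_p\big((\al)_n\big)=:m$, and $m\ge 1$ because by \eqref{condak} the prime $p$ divides $\prod_{j=1}^{k}(\al+(n-j)d)$.

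First I would locate which factors $\al+id$, $0\le i\le n-1$, are divisible by $p$. Since $p>d$ we have $\gcd(p,d)=1$, so the indices $i$ with $p\mid\al+id$ form a single residue class modulo $p$; let $i_0$ and $i_{\max}$ be the smallest and largest such index. The second half of \eqref{condak} says none of $i=0,1,\dots,k-1$ qualifies, so $i_0\ge k$; the first half says some $i\in\{n-k,\dots,n-1\}$ qualifies, so $i_{\max}\ge n-k$. Writing $S_j=\sum_{i=j}^{n-1}v_p(\al+id)$, we get $S_j=m$ for $0\le j\le i_0$ and $S_j\ge1$ for $0\le j\le i_{\max}$, so from $v_p(b_j)=v_p(a_j)+S_j\ge S_j$,
\begin{align*}
p^{m}\mid b_0,b_1,\dots,b_k,\qquad p\mid b_0,b_1,\dots,b_{n-k},\qquad p\nmid b_n .
\end{align*}
Reducing modulo $p$, and using that the leading coefficients of $g$ and $h$ divide $b_n$ and so are prime to $p$, we have $\overline G=\overline g\,\overline h$ in $\mathbb{F}_p[x]$ with $\deg\overline g=k$, $\deg\overline h=n-k$, while $\overline G=x^{\,n-k+1}\overline v(x)$ with $\deg\overline v\le k-1$; since $v_x(\overline h)\le n-k$ this forces $v_x(\overline g)\ge1$, i.e.\ $p\mid c_0$. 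Equivalently, the Newton polygon of $G$ at $p$ starts at $(0,m)$ and, because $v_p(b_j)\ge m$ for $j\le i_0\ge k$ while $v_p(b_n)=0$, its first edge has horizontal length $>k$ and strictly negative slope, so the Newton polygon of $g$ has at least one edge of negative slope.

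The remaining step, which is the crux, is to turn $p\mid c_0$ together with the displayed divisibilities into a contradiction. The mechanism is to propagate divisibility through the Cauchy relations $b_j=\sum_{i}c_ih_{j-i}$ (together with the Newton polygons of $g$ and $h$): since $p\mid c_0$, the relations $v_p(b_j)\ge1$ for $j\le n-k$ force $p$ to divide a long initial block of coefficients of $h$; feeding this into the relations for $j\le i_0$, where $v_p(b_j)=v_p(a_j)+m$ is \emph{large}, one shows $v_p(h_j)$ must keep growing as $j$ decreases through the $p$-divisible range, until the forced value $v_p(b_0)=m$ (or $v_p(b_1)=m$) caps the accumulated valuation and yields a contradiction; a short case split on $v_p(c_0)$, which satisfies $1\le v_p(c_0)\le m$ and $v_p(h_0)=m-v_p(c_0)$, finishes it. The hypothesis $p\ge\min(2k,d(d-1))$ is precisely what makes this count uniform: when $p\ge 2k$ the window $\{n-k,\dots,n-1\}$ has length $<p$, so it meets the $p$-divisible residue class in just one index, whence the ``excess'' valuation $S_j-S_{n-k}$ is concentrated at a single step whose size is controlled by $\al+id<nd$; when instead $2k>d(d-1)$ one uses $d(d-1)$ to bound the multiplicities $v_p(\al+id)$ of the relevant factors directly. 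Without such control the uncontrolled middle coefficients $a_1,\dots,a_{n-1}$ and possible higher powers $p^2,p^3,\dots\mid\al+id$ could a priori absorb the factorization; making this valuation bookkeeping precise and uniform in $k$ is the only genuine obstacle, everything before it being a routine combination of the formula for the $b_j$ with standard reduction-mod-$p$ and Newton-polygon facts.
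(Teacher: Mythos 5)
Your setup is sound and matches the standard framework (and, implicitly, the paper's): you correctly compute $v_p(b_j)\ge S_j$, locate the $p$-divisible indices via the single residue class mod $p$, deduce $i_0\ge k$ and $i_{\max}\ge n-k$ from \eqref{condak}, and conclude $p\nmid b_n$, $p\mid b_0,\dots,b_{n-k}$, $p^m\mid b_0,\dots,b_k$, hence $p\mid c_0$. But none of this uses the hypothesis $p\ge\min(2k,d(d-1))$, and the step you yourself label ``the crux'' --- turning these divisibilities into a contradiction --- is only described as a plan, not proved. That step is the entire content of the paper's proof. The divisibility pattern you derive is genuinely insufficient on its own: when $m=v_p((\al)_n)$ is large, the Newton polygon of $G$ with respect to $p$ can have many interior lattice points on its edges, and a degree-$k$ factor is only excluded if one proves the quantitative estimate $\mathrm{ord}_p(\Delta_j)/j<1/k$ for \emph{every} $1\le j\le n$ (this is the reduction via \cite[Corollary 2.1]{stirred} that the paper invokes). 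Your ``propagation through the Cauchy relations'' sketch does not establish this, and your heuristic for how $\min(2k,d(d-1))$ enters is not the actual mechanism.

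Concretely, what is missing is the following chain. First, from $p>\al$, $p\nmid\Delta_k$ and the observation that every prime $q\equiv l^{-1}\al\pmod d$ with $q\le kd/l$ divides $\Delta_k$, one gets the key auxiliary inequality $p>\frac{kd}{d-1}$. Then, writing $\al+(j_0-1)d=pl_0$ with $j_0>k$ and $1\le l_0<d$, and parametrizing the relevant $j$ as $j=j_0+ps$, one must bound $\mathrm{ord}_p(\Delta_j)=s+1+\mathrm{ord}_p(l_0(l_0+d)\cdots(l_0+sd))$ and verify $\mathrm{ord}_p(\Delta_j)<j/k$ in two regimes: $s<p$ (where at most one factor $l_0+id$ is divisible by $p$, and the subcase $p<2k$, hence $p\ge d(d-1)$, needs a further split on whether $s\ge d-1$) and $s\ge p$ (where one bounds $\mathrm{ord}_p(s!)$ by $s/(p-1)$ and reduces to the inequality $1+\frac{1}{p-1}+\frac{2}{p}\le\frac{p}{k}$, again split according to $p\ge 2k$ or $p\ge d(d-1)$). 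Without carrying out this case analysis your argument does not close, so as it stands the proposal is an incomplete proof rather than a correct alternative route.
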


We compare Lemma \ref{mainupdate} with \cite[Lemma 10.1]{stirred}. The assumption on
$p$ in \cite[Lemma 10.1]{stirred} has been relaxed.
For any integer $\nu >1$, we denote by $\om(\nu)$ the number of distinct prime factors of $\nu$
and $\om(1)=0$. In Section $3$, we give an upper bound for $m$ when
$\om(\prod^{k-1}_{i=0}(m+id))\le t$ for some $t$. In Section $4$, we give preliminaries
for the proof of Theorem \ref{1/4}. In Section $5$, we complete the proof.

\section{Proof of Lemma \ref{mainupdate}}

Let
\begin{align*}
\Delta_j=\al (\al +d )\cdots (\al +(j-1)d ).
\end{align*}
For each $1\le l<d$ and gcd$(l, d)=1$, we observe that $q|\Delta_k$ for all
primes $q\equiv l^{-1}\al($mod $d)$ and $q\le \frac{kd}{l}$.
Since $p>\al $ and $p\nmid \Delta_k$, we have $p>\frac{kd}{d-1}$.
Let $j_0$ be the minimum $j$ such that $p|(\al +(j-1)d )$ and we write $\al +(j_0-1)d=pl_0$.
Then $j_0>k$ since $p\nmid \Delta_k$ and we observe that $1\le l_0<d$ by the minimality of
$j_0$. As in the proof of \cite[Corollary 2.1]{stirred}, it suffices to show that
\begin{align*}
\phi_j=\frac{{\rm ord}_p(\Delta_j)}{j}<\frac{1}{k} \ \ {\rm for} \ 1\le j\le n.
\end{align*}
We may restrict to those $j$ such that $\al +(j-1)d =pl$ for some $l$. Then
$(j-j_0)d =p(l-l_0)$ implying $d |(l-l_0)$. Writing $l=l_0+sd$, we get
$j=j_0+ps$. Note that if $p|(\al +(i-1)d )$, then $\al +(i-1)d =p(l_0+r d)$
for some $r\ge 0$. Hence we have
\begin{align}\label{l_or}
{\rm ord}_p(\Delta _j)&={\rm ord}_p((pl_0)(p(l_0+d))\cdots (p(l_0+sd ))=
s+1+{\rm ord}_p(l_0(l_0+d)\cdots (l_0+sd))
\end{align}
for some integer $s\ge 0$. Further we may suppose that $s>0$ otherwise the assertion
follows since $p>d>l_0$. Let $r_0$ be such that ord$_p(l_0+r_0d )$ is maximal. We consider two cases.

\noindent
{\bf Case I:} Assume that $s<p$. Then $p$ divides at most one term of
$\{l_0+id : 0\le i\le s\}$ and we obtain from \eqref{l_or} and
$l_0+sd<(s+1)d<p^2$ that $\phi_j\le \frac{s+2}{j_o+ps}$. Thus
$\phi_j<\frac{1}{k}$ if $s(p-k)\ge k$ since $j_0-k+s(p-k)-k\ge 1+s(p-k)-k$.
If $p\ge 2k$, then $s(p-k)\ge k$. Thus we may suppose that $p<2k$. Then
$p\ge d(d-1)$. Since $p>\frac{kd}{d-1}$, we obtain $s(p-k)\ge k$ if $s\ge d-1$.
We may suppose $s\le d-2$. Then $l_0+sd\le d-1+(d-2)d<p$ and therefore
$\phi_j=\frac{s+1}{j_0+ps}\le \frac{s+1}{k+1+(k+1)s}<\frac{1}{k}$.

\noindent
{\bf Case II:} Let $s\ge p$.  Then
\begin{align*}
{\rm ord}_p(\Delta _j)\le s+1+{\rm ord}_p(l_0+r_0d)+ {\rm ord}_p(s!)\le s+1+
\frac{\log (l_0+sd)}{\log p}+\frac{s}{p-1}.
\end{align*}
We have $p\ge d+1$. This with $l_0\le d-1<p\le s$ imply
$\log (l_0+sd)\le \log s(d+1)=\log s +\log (d+1)\le \log s +\log p$. Hence
\begin{align*}
{\rm ord}_p(\Delta _j)\le s+1+\frac{s}{p-1}+\frac{\log s}{\log p}+1.
\end{align*}
Since $\frac{j}{k}=\frac{j_0+ps}{k}>1+\frac{p}{k}s$, it is enough to show that
\begin{align*}
\frac{p}{k}\ge 1+\frac{1}{p-1}+\frac{1}{s}+\frac{\log s}{s\log p}
\end{align*}
Since $s\ge p$, the right hand side of the above inequality is at most $1+\frac{1}{p-1}+
\frac{2}{p}$ and therefore it suffices to show
\begin{align}\label{iterm2}
1+\frac{1}{p-1}+\frac{2}{p}\le \frac{p}{k}.
\end{align}
Let $p\ge 2k$. Then $p\ge 2k+1\ge k+2$ and the left hand side of \eqref{iterm2} is at most
\begin{align*}
1+\frac{1}{2k}+\frac{2}{2k+1}\le 1+\frac{2}{k}=\frac{k+2}{k}\le \frac{p}{k}.
\end{align*}
Thus we may assume that $p<2k$. Then $p>d(d-1)$ since $p\nmid d$. Further $d\ge 3$ since
$p\ge \frac{kd}{d-1}$. Therefore the left hand side of \eqref{iterm2} is at most
\begin{align*}
1+\frac{3}{d(d-1)}\le 1+\frac{1}{d-1}=\frac{d}{d-1}\le \frac{p}{k}.
\end{align*}
Hence the proof.
$\hfill \Box$

\section{An upper bound for $m$ when $\om(\Delta (m, d, k))\le t$}\label{omdbd}

Let $m$ and $k$ be positive integers with $m>kd$ and gcd$(m,d) =1$. We write
\begin{align*}
\Delta (m, d, k)=m(m+d) \cdots (m+(k-1)d).
\end{align*}
Assume that
\begin{align}\label{piDk}
\om(\Delta (m, d, k))\le t.
\end{align}
for some integer $t$. For every prime $p$ dividing $\Delta $, we delete a term
$m+i_pd$ such that ord$_p(m+i_pd)$ is maximal.
Then we have a set $T$ of terms in $\Delta(m, k)$ with
\begin{align*}
|T|=k-t:=t_0.
\end{align*}
We arrange the elements of $T$ as $m+i_1d<m+i_2d<\cdots <m+i_{t_0}d$. Let
\begin{align}\label{P0}
{\frak P}:=\displaystyle{\prod^{t_0}_{\nu =1}} (m+i_{\nu}d) \geq m^{t_0}.
\end{align}

Now we deduce an upper bound for ${\frak P}$. For a prime $p$, let $r$ be
the highest power of $p$ such that $p^r\le k-1$. Let $w_l=\#\{m+id: p^l|(m+i), m+i\in T\}$
for $1\le l\le r$. By Sylvester and Erd\H{o}s argument, we have
$w_l\le [\frac{i_0}{p^l}]+[\frac{k-1-i_o}{p^l}]\le [\frac{k-1}{p^l}]$.
Let $h_p>0$ be such that
$[\frac{k-1}{p^{h_p+1}}]\le t_0<[\frac{k-1}{p^{h_p}}]$. Then
$|\{m+id\in T: {\rm ord}_p(n+id)\le h_p\}|\le t_0-w_{h_p+1}$.
Hence
\begin{align*}
{\rm ord}_p({\frak P})&\le rw_r +
\sum^{r-1}_{u=h_p+1}u(w_u-w_{u+1})+h_p(t_0-w_{h_p+1})\\
&=w_r+w_{r-1}+\cdots +w_{h_p+1}+h_pt_0\\
&\le \sum^{r}_{u=1}\lf\frac{k-1}{p^u}\rf +h_pt_0-
\sum^{h_p}_{u=1}\lf \frac{k-1}{p^u}\rf ={\rm ord}_p((k-1)!)+
h_pt_0-\sum^{h_p}_{u=1}\lf \frac{k-1}{p^u}\rf .
\end{align*}
It is also easy to see that ord$_p({\frak P})\le $ord$_p(k-1)!)$ if
$p\nmid d$ and ord$_p({\frak P})=0$ if $p|d$.
Therefore
\begin{align*}
m^{t_0}\le {\mathfrak P} \leq (k-1)! \prod_{p\leq k}p^{L_0(p)}
\end{align*}
where
\begin{align*}
L_0(p)=\begin{cases} \min (0, h_pt_0-\sum^{h_p}_{u=1}\lf \frac{k-1}{p^u}\rf ) & {\rm if} \ p\nmid d\\
-{\rm ord}_p((k-1)!) & {\rm if} \ p| d.
\end{cases}
\end{align*}
Observe that
\begin{align}\label{E0}
m^{t_0}\le (k-1)!\prod_{p|d}p^{-{\rm ord}_{p}((k-1)!)}.
\end{align}
We also note that $L_0(p)\leq 0$ for any prime $p$. Hence for any $l\ge 1$, we
have from \eqref{P0} that
\begin{align}\label{Lbd}
m\le \left({\mathfrak P}\right)^{\frac{1}{t_0}} \le
\left( (k-1)! \prod_{p\le p_l}p^{L_0(p)}\right)^{\frac{1}{t_0}}=:L(k, l).
\end{align}

\section{Preliminaries for Theorems \ref{1/4}}

Let $m$ and $k$ be positive integers with $m>kd$ and gcd$(m, d) =1$. We write
\begin{align*}
\Delta (m, d, k)=m(m+d) \cdots (m+(k-1)d).
\end{align*}
For positive integers $\nu, \mu$ and $1\le l<\mu$ with gcd$(l, \mu)=1$, we write
\begin{align*}
\pi(\nu, \mu, l)=&\sum_{\underset{p\equiv l({\rm mod} \ \mu)}{p\le \nu}} 1, \
\pi(\nu)=\pi(\nu, 1, 1)\\
\theta(\nu, \mu, l)=&\sum_{\underset{p\equiv l({\rm mod} \ \mu)}{p\le \nu}}
\log p.
\end{align*}
Let $p_{i, \mu, l}$ denote the $i$th prime congruent to $l$ modulo $\mu$. Let
$\del_{\mu}(i, l)=p_{i+1, \mu, l}-p_{i, \mu, l}$ and
$W_\mu (i, l)=(p_{i, \mu, l}, p_{i+1, \mu, l})$.
We recall some well-known estimates from prime number theory.

\begin{lemma}\label{pix} Let $k\in \Z$ and $\nu\in \R$ be positive. We have
\begin{itemize}
\item[$(i)$] $\pi (\nu) \le \left(1+\frac{1.2762}{\log \nu}\right)$ for $\nu >1$
\item[$(ii)$] {\rm ord}$_p(k-1)! \geq \frac{k-p}{p-1}-\frac{\log (k-1)}{\log p}$ for $k\ge 2$.
\item[$(iii)$] $\sqrt{2\pi k}~e^{-k}k^{k}e^{\frac{1}{12k+1}} <k!<
\sqrt{2\pi k}~e^{-k}k^{k} e^{\frac{1}{12k}}$.
\end{itemize}
\end{lemma}

The estimates $(i)$ is due to Dusart(\cite{dus1}. The estimate $(iii)$ is due to Robbins \cite[Theorem 6]{robb}.
For a proof of $(ii)$, see \cite[Lemma 2(i)]{shanta2}.
\qed

The following lemma is due to Ramar\'e and Rumely \cite[Theorems 1, 2]{Rama}.

\begin{lemma}\label{ramar} Let $d=4$ and $l\in \{1, 3\}$. For $\nu_0\le 10^{10}$, we have
\begin{align}\label{lthe}
\theta(\nu, d, l)\ge \begin{cases}
\frac{\nu}{2}(1-0.002238) \ & {\rm for} \ \nu\ge 10^{10}\\
\frac{\nu}{2}\left(1-\frac{2\times 1.798158}
{\sqrt{\nu_0}}\right) \ &{\rm for} \ 10^{10}>\nu\ge \nu_0
\end{cases}
\end{align}
and
\begin{align}\label{uthe}
\theta(\nu, d, l)\le \begin{cases}
\frac{\nu}{2}(1+0.002238) \ &{\rm for} \ \nu\ge 10^{10}\\
\frac{\nu}{2}\left(1+\frac{2\times 1.798158}
{\sqrt{\nu_0}}\right) \ &{\rm for} \ 10^{10}>\nu\ge \nu_0.
\end{cases}
\end{align}
\end{lemma}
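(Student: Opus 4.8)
The plan is to obtain both \eqref{lthe} and \eqref{uthe} directly from the explicit estimates of Ramar\'e and Rumely, specialized to modulus $d=4$, where $\varphi(4)=2$ supplies the main term $\frac{\nu}{2}$. For the range $\nu\ge 10^{10}$, their first theorem provides, for each reduced residue $l$ modulo $4$, a \emph{relative} error bound of the shape
$$\left|\theta(\nu, 4, l)-\frac{\nu}{\varphi(4)}\right|\le 0.002238\cdot \frac{\nu}{\varphi(4)}\qquad(\nu\ge 10^{10}).$$
Substituting $\varphi(4)=2$ and splitting this single absolute-value inequality into its two one-sided forms yields precisely the first cases of \eqref{lthe} and \eqref{uthe}, with no further work.

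For the finite range $10^{10}>\nu\ge\nu_0$ I would instead invoke their companion square-root error bound, which on this range takes the form
$$\left|\theta(\nu, 4, l)-\frac{\nu}{2}\right|\le 1.798158\sqrt{\nu}.$$
Writing $c:=2\times 1.798158$ and re-expressing the right-hand side as a relative error, $1.798158\sqrt{\nu}=\frac{c}{\sqrt{\nu}}\cdot\frac{\nu}{2}$, gives
$$\frac{\nu}{2}\left(1-\frac{c}{\sqrt{\nu}}\right)\le \theta(\nu, 4, l)\le \frac{\nu}{2}\left(1+\frac{c}{\sqrt{\nu}}\right).$$
Since $\nu\ge\nu_0$ forces $\sqrt{\nu}\ge\sqrt{\nu_0}$ and hence $\frac{c}{\sqrt{\nu}}\le \frac{c}{\sqrt{\nu_0}}$, I replace $\sqrt{\nu}$ by $\sqrt{\nu_0}$ in the denominators, which weakens each estimate in the correct direction: the lower bound is preserved because $1-\frac{c}{\sqrt{\nu}}\ge 1-\frac{c}{\sqrt{\nu_0}}$, and the upper bound because $1+\frac{c}{\sqrt{\nu}}\le 1+\frac{c}{\sqrt{\nu_0}}$. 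Multiplying by $\frac{\nu}{2}>0$ then produces the uniform second cases of \eqref{lthe} and \eqref{uthe}.

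The work here is bookkeeping rather than mathematics: the substantive input --- the verification of the generalized Riemann hypothesis for Dirichlet $L$-functions of modulus $4$ up to a large height, converted into fully explicit error terms --- is entirely that of Ramar\'e and Rumely. The care required on our side is threefold. First, one must match their normalization, confirming that $\varphi(4)=2$ and that the tabulated constants for modulus $4$ at the cutoff $10^{10}$ are indeed $0.002238$ and $1.798158$. Second, one must check whether their square-root estimate is stated for $\theta$ or for $\psi$; if for $\psi$, the prime-power correction $\psi(\nu,4,l)-\theta(\nu,4,l)=O(\sqrt{\nu})$ must be shown to be already absorbed into the quoted constant. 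Third, one must track the direction of the monotone replacement of $\sqrt{\nu}$ by $\sqrt{\nu_0}$ separately for the lower and the upper inequality, exactly as carried out above; this is the only genuine (if elementary) point where a sign slip could occur.
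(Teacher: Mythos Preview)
Your proposal is correct and matches the paper's approach: the paper simply attributes the lemma to Ramar\'e and Rumely \cite[Theorems 1, 2]{Rama} without further argument, and what you have written is exactly the routine specialization (taking modulus $4$, reading off the tabulated constants $0.002238$ and $1.798158$, and weakening $\sqrt{\nu}$ to $\sqrt{\nu_0}$) needed to extract the stated bounds from their theorems.
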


We derive from Lemmas \ref{pix} and \ref{ramar} the following result.

\begin{coro}\label{n<<}
Let $10^6<m\le 138\times 4k$. Then $P(\Delta(m, 4, k))\ge m$.
\end{coro}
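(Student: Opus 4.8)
The plan is to argue by contradiction: suppose $P(\Delta(m,4,k)) < m$, so every prime dividing $\Delta(m,4,k) = m(m+4)\cdots(m+4(k-1))$ is less than $m$, and in particular $\om(\Delta(m,4,k)) \le \pi(m)$. Since $d = 4$ and $\gcd(m,4)=1$, the terms $m+id$ all lie in one residue class $l \in \{1,3\}$ modulo $4$; but more usefully, I will exploit that \emph{all} the primes dividing $\Delta(m,4,k)$ that exceed $4k$ must actually occur to the first power and correspond to distinct terms, so I can run the deletion argument of Section \ref{omdbd}. Concretely, I would take $t = \pi(m)$ in \eqref{piDk}, obtain the set $T$ of size $t_0 = k - \pi(m)$ (after deleting one term per prime), and feed this into the bound \eqref{Lbd}, giving $m \le L(k,l)$ for every $l$. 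The point of Corollary \ref{n<<} is that this is impossible once $m > 10^6$ and $m \le 138 \cdot 4k$.

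Next I would make the size estimates explicit. From \eqref{E0} and Lemma \ref{pix}$(iii)$ (Stirling) together with Lemma \ref{pix}$(ii)$ applied to $p = 2$, one gets $m^{t_0} \le (k-1)!\, 2^{-\operatorname{ord}_2((k-1)!)} \le (k-1)! \, 2^{-(k-2)}$ roughly, which already forces $t_0$ small relative to $k$ unless $m$ is small. But the hypothesis $m \le 138\cdot 4k = 552k$ is what pins things down: I would bound $t_0 = k - \pi(m) \ge k - \pi(552k)$, and by Lemma \ref{pix}$(i)$, $\pi(552k) \le \frac{552k}{\log(552k)}\bigl(1 + \frac{1.2762}{\log(552k)}\bigr)$, so $t_0 \ge k\bigl(1 - \frac{552}{\log(552k)}(1+o(1))\bigr)$. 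This is only useful when $k$ is enormous, so the real content is the \emph{lower} bound $m > 10^6$: then we know $k \ge m/552 > 10^6/552 > 1800$, and simultaneously $m > 10^6$ is large enough that the prime-counting and $\theta$-estimates from Lemma \ref{ramar} are in their effective range (note $\nu_0 = 10^6$ sits comfortably below $10^{10}$, giving $\theta(\nu,4,l) \ge \frac{\nu}{2}(1 - \frac{2\cdot 1.798158}{10^3})$ for $\nu \ge 10^6$).

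The cleaner route, which I would actually pursue, is to compare two counts of primes in a dyadic-type window. Among $m, m+4, \dots, m+4(k-1)$ there are $k$ terms, covering an arithmetic progression of length $4k-3$ in a fixed class mod $4$; on the other hand, $\theta(m+4k, 4, l) - \theta(m, 4, l)$ counts (with log weights) the primes $\equiv l \pmod 4$ in $(m, m+4k]$, each of which divides exactly one term $m + id$ (since two such terms differ by a multiple of $4$ and are $\le m + 4k < 2m$ apart... actually $< 4k \le m \cdot \frac{4k}{m}$, and here $4k \ge m/138 $, so I must instead use that a prime $p > 4k$ divides at most one term). For $l$ the class of $m$ mod $4$, every prime $p \in (4k, m+4k]$ with $p \equiv l \pmod 4$ dividing some term forces $p \le m + 4(k-1) < 2m$, hence contributes a factor $\ge m$... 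I would instead directly say: the product of the large prime factors ($> 4k$) of $\Delta(m,4,k)$ is at most $\prod_{\text{terms}} (m+4(k-1)) \le (m+4k)^k$, while it is at least $m$ raised to the number of terms not hit by a small prime, and then balance. The main obstacle will be getting the constant $138$ to work: one must show that the Ramaré–Rumely estimate $\theta(\nu,4,l) \approx \nu/2$ with error $\le \frac{\nu}{2}\cdot\frac{2\cdot 1.798158}{\sqrt{10^6}} = \frac{\nu}{2}\cdot 0.003596$ forces enough primes $\equiv l \pmod 4$ in the interval $(m, m+4k] \subseteq (m, 139m]$ to exceed the number of terms that could absorb them as higher prime powers or via primes $\le 4k$, and the factor $139 = 138+1$ must be chosen precisely so that the lower bound $\theta(139m,4,l) - \theta(m,4,l) \gtrsim \frac{138m}{2}(1 - \varepsilon)$ beats the crude upper bound $k \log(m + 4k) \le k\log(139m)$ for the log-weighted size, using $k \le 139m/4$; tracking these constants carefully is the only delicate part, everything else being the machinery of Section \ref{omdbd}.
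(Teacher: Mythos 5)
Your proposal does not close, and the gap is conceptual rather than computational. The one observation that makes this corollary a three-line consequence of Lemma \ref{ramar} is missing: since $d=4$ and $m\equiv l\pmod 4$, the terms $m, m+4,\dots,m+4(k-1)$ are \emph{exactly} the integers congruent to $l$ modulo $4$ in the interval $[m,\,m+4(k-1)]$, so any prime $p\equiv l\pmod 4$ with $m\le p\le m+4(k-1)$ is itself one of the terms, hence divides $\Delta(m,4,k)$ and gives $P(\Delta(m,4,k))\ge m$ at once. Thus all one needs is $\theta(m+4(k-1),4,l)-\theta(m-4,4,l)>0$, which Lemma \ref{ramar} with $\nu_0=10^6$ delivers precisely because $\frac{m}{4k}\le 138<\frac{10^3}{4\times 1.798158}-\frac12\approx 138.5$. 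You assemble the right ingredients (the $\theta(\cdot,4,l)$ bounds, the error term $\frac{2\times 1.798158}{\sqrt{10^6}}$, the tuning of the constant $138$), but you never make this identification of ``primes of the progression lying in the window'' with ``terms of $\Delta$'', and without it none of your sketched strategies terminates in the conclusion $P(\Delta)\ge m$.

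Concretely: your first route (the deletion argument of Section \ref{omdbd} with $t=\pi(m)$) is dead on arrival, because in the range of the corollary $\pi(m)>\frac{m}{\log m}\geq\frac{552k}{\log (552k)}>k$, so $t_0=k-\pi(m)<0$ and \eqref{Lbd} says nothing; that machinery is what the paper uses in the \emph{complementary} regime $m>138\cdot 4k$, where one is entitled to take $t=\pi(4k)-1$ because there $P(\Delta)\le 4k$ may be assumed, whereas the corollary as stated only lets you suppose $P(\Delta)<m$. Your final route compares $\theta(139m,4,l)-\theta(m,4,l)$ against $k\log(139m)$; but primes in $(m+4(k-1),\,139m]$ need not divide $\Delta$ at all, so winning that comparison proves nothing about $P(\Delta)$, and no global ``mass balance'' of products of large prime factors is needed --- the existence of a single prime in the short interval $(m-4,\,m+4(k-1)]$ in the residue class of $m$ suffices, and that is exactly what the positivity of the $\theta$-difference supplies.
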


\begin{proof}
Let $d=4$ and $10^6\le m\le 138\times dk$. Let $l\in \{1, 3\}$ and assume $m\equiv l($mod $d)$.
We observe that $P(\Delta(m, d, k)\ge m$ holds if
\begin{align*}
\theta(m+d(k-1), d, l)-\theta(m-d, d, l)=\sum_{\underset{p\equiv l(d)}
{m<p\le m+(k-1)d}}\log p >0.
\end{align*}
From Lemmas \ref{pix} and \ref{ramar}, we have
\begin{align*}
\frac{\theta(m-d, d, l)}{\frac{m-d}{\phi(d)}}<1+\frac{2\times 1.798158}{\sqrt{10^6}}
\end{align*}
and
\begin{align*}
\frac{\theta(m+(k-1)d, d, l)}{\frac{m-d+dk}{\phi(d)}}>
1-\frac{2\times 1.798158}{\sqrt{10^6}}
\end{align*}
Thus $P(\Delta(m, d, k)\ge m$ holds if
\begin{align*}
(1-\frac{2\times 1.798158}{10^3})dk>\frac{4\times 1.798158}{10^3}(m-d)
\end{align*}
which is true since
\begin{align*}
\frac{m}{dk}\le 138<\frac{10^3}{4\times 1.798158}-\frac{1}{2}.
\end{align*}
Hence the assertion.
\end{proof}

The following lemma is a computational result.

\begin{lemma}\label{diff}
Let $l\in \{1, 3\}$. Then $\del_4(i, l)\le 24, 32, 60, 200$
according as $p_{i, 4, l}\le 120, 250, 2400, 10^6$, respectively.
\end{lemma}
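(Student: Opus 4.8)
The plan is to verify this purely by direct computation, since the statement only concerns primes $p_{i,4,l}$ up to $10^6$, a finite range. First I would enumerate all primes up to $10^6$ and sort them into the two residue classes $1$ and $3$ modulo $4$. Walking along each class in increasing order, I compute the consecutive gap $\del_4(i,l) = p_{i+1,4,l} - p_{i,4,l}$ and record the running maximum. The claim splits the range $[1,10^6]$ into four windows, $p_{i,4,l}\le 120$, then $\le 250$, then $\le 2400$, then $\le 10^6$, and asserts the bounds $24,32,60,200$ respectively on the gaps whose left endpoint lies in each window; so I would simply check that in each window the maximum observed gap does not exceed the stated value.

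The main steps, then, are: (1) generate the list of primes $\le 10^6$ (a sieve of Eratosthenes is more than adequate); (2) for each of $l=1,3$, extract the subsequence of primes $\equiv l \pmod 4$ and form the sequence of consecutive differences; (3) for each of the four thresholds $B \in \{120, 250, 2400, 10^6\}$, take the maximum of $\del_4(i,l)$ over all $i$ with $p_{i,4,l}\le B$ and confirm it is $\le 24, 32, 60, 200$ respectively. Since the bounds are monotone and the windows nested, it suffices in practice to track, as one scans upward, the largest gap seen so far and check it against the active threshold as each boundary $120, 250, 2400$ is crossed.

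There is essentially no mathematical obstacle here: the only thing that could go wrong is an arithmetic or bookkeeping error in the computation, so the "hard part'' is just being careful that the gap attached to a prime $p_{i,4,l}$ is classified by $p_{i,4,l}$ (its left endpoint) and not by $p_{i+1,4,l}$, and that both residue classes are checked. One can spot-check a few values by hand — for instance the prime pairs realizing the extremal gaps, such as a gap of $24$ between small primes $\equiv l\pmod 4$, a gap of $32$ just past $120$, and so on — to gain confidence, but the full verification is delegated to the machine. This is why the lemma is labelled a computational result.
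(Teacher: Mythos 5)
Your approach is exactly what the paper intends: the lemma is introduced there with the single remark that it is ``a computational result,'' and no further proof is given, so a direct sieve-and-check of consecutive prime gaps in each residue class modulo $4$ up to $10^6$ is precisely the argument. Your care in attaching each gap to its left endpoint $p_{i,4,l}$ and checking both classes $l=1,3$ is the right bookkeeping, and nothing more is needed.
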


As a consequence, we obtain

\begin{coro}\label{<250}
Let $d=4$, $k\ge 6$ and $m$ be such that $m\le 120, 250, 2400, 10^6$
when $6\le k<8$, $8\le k<15$, $15\le k<50$ and $k\ge 50$ respectively.
Then $P(\Delta(m, d, k))\ge m$.
\end{coro}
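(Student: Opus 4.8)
The plan is to reduce the claim to the existence of a prime in a short arithmetic progression and then read it off Lemma~\ref{diff}. Recall the standing hypotheses $m>4k$ and $\gcd(m,4)=1$, and put $l:=m\bmod 4\in\{1,3\}$. First I would observe that, since $m>4k>4(k-1)$, every term $m+4j$ with $0\le j\le k-1$ lies in $[m,2m)$; hence if a prime $p\ge m$ divides $m+4j$, then $p=m+4j$. Therefore $P(\Delta(m,4,k))\ge m$ holds if and only if at least one of the integers $m,m+4,\dots,m+4(k-1)$ — which are exactly the integers $\equiv l\pmod 4$ in $[m,m+4(k-1)]$ — is prime, and I would prove this last assertion by contradiction.

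So suppose none of $m,m+4,\dots,m+4(k-1)$ is prime. Since $m>4k\ge 24$ there is a prime $\equiv l\pmod 4$ below $m$; let $p_{i,4,l}$ be the largest such prime, so that $p_{i,4,l}\le m-4$ (because $p_{i,4,l}\equiv m\pmod 4$). By maximality every integer $\equiv l\pmod 4$ in $(p_{i,4,l},m)$ is composite, and by assumption so is every $m+4j$ with $0\le j\le k-1$; hence there is no prime $\equiv l\pmod 4$ in $(p_{i,4,l},\,m+4(k-1)]$, so $p_{i+1,4,l}\ge m+4k$ and
\[
\del_4(i,l)=p_{i+1,4,l}-p_{i,4,l}\ \ge\ (m+4k)-(m-4)\ =\ 4(k+1).
\]

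Finally I would feed this into Lemma~\ref{diff}. In each of the four ranges $p_{i,4,l}<m$, and $m$ is at most $120,250,2400,10^6$ respectively, so Lemma~\ref{diff} applies and gives an upper bound on $\del_4(i,l)$ that contradicts $\del_4(i,l)\ge 4(k+1)$: for $6\le k<8$ and $m\le 120$, $\del_4(i,l)\le 24<28$; for $8\le k<15$ and $m\le 250$, $\del_4(i,l)\le 32<36$; for $15\le k<50$ and $m\le 2400$, $\del_4(i,l)\le 60<64$; and for $k\ge 50$ and $m\le 10^6$, $\del_4(i,l)\le 200<204$. In every case the assumption fails, so some $m+4j$ is prime and $P(\Delta(m,4,k))\ge m$.

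I expect no real obstacle: the corollary is a short deduction from Lemma~\ref{diff}. The one spot deserving care is the gap estimate, where one must remember that the interval between $p_{i,4,l}$ and $p_{i+1,4,l}$ contains both the (automatically composite) integers $\equiv l\pmod 4$ strictly below $m$ and the $k$ terms of $\Delta(m,4,k)$, which together yield the clean bound $\del_4(i,l)\ge 4(k+1)$.
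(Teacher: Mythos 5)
Your proof is correct and follows essentially the same route as the paper: assume no term of $\Delta(m,4,k)$ is prime, deduce that the gap $\del_4(i,l)$ between consecutive primes $\equiv l \pmod 4$ around $m$ is at least $4(k+1)$, and contradict Lemma~\ref{diff}. The numerical checks ($24<28$, $32<36$, $60<64$, $200<204$) are exactly what the paper's bound $\del_d(i,l)\ge d(k+1)$ requires.
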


\begin{proof}
We may assume that $p_{i, d, l}<m<m+(k-1)d<p_{i+1, d, l}$ for some $i$ otherwise
the assertion follows. Thus $p_{i+1, d, l}\geq d+m+(k-1)d$ and $p_{i, d, l}\leq m-d$.
Therefore $\del_d(i, l)=p_{i+1, d, l}-p_{i, d, l}\ge d+m+(k-1)d-(m-d)=d(k+1)>dk$.
Now the assertion follows from Lemma \ref{diff}.
\end{proof}

\section{Proof of Theorem \ref{1/4}}\label{<p250}

Let $2\le k\le \frac{n}{2}$ and assume that $G(x)$ has a factor of degree $k$.
We take $m=\al+4(n-k)$. Since $n\ge 2k$, we have $m>4k$. We may assume that
$P(\Delta(m, 4, k))\le 4k$ otherwise the assertion follows from Lemma \ref{mainupdate}
since $\al +4(k-1)<4k$. Thus $P(\Delta(m, 4, k))\le 4k<m$.

Let $k\le 6$. Then $P(\Delta(m, 4, k))\le 4k\le 23$ implying $P(m(m+4))\le 24$. Then
$m+4=N$ where $N$ is given by \cite[Table IIA]{lehmer} for $p\le 23$.
For each such $N$ and for each $2\le k\le 6$, we first
restrict to those $m=N-4>4k$ such that $P(\Delta(m, 4, k))\le 4k$. They
are given by $k=2$, $m\in \{21, 45\}$. Here $P(m(m+4))=7$ and since
$m\equiv 1$ modulo $4$, the assertion follows by taking $p=7$ in Lemma
\ref{mainupdate}.

Therefore $k\ge 7$. Let $\om_1(k):=\underset{\al \in \{1, 3\}}{\max}\omega(\Delta(\al, 4, k))$.
If $\om(\Delta(m, 4, k))>\om_1$, then there is a prime $p$ satisfying \eqref{condak} implying
$p>k\ge 7$. Observe that $11|\Delta(3, 4, k)$ and $11|\Delta(1, 4, k)$ for $k\ge 9$. For
$k\in \{7, 8\}$, if $\om(\Delta(m, 4, k))>\om_1$, then there are two primes $p>k$ dividing
$\Delta(m, 4, k)$ but $p\nmid \Delta(1, 4, k)$ and hence there is a prime $p>11$ satisfying
\eqref{condak}. Therefore
by Lemma \ref{mainupdate}, we may assume that $\om(\Delta(m, 4, k))\le \om_1$. Taking $t=\om_1$,
we obtain from \eqref{Lbd} with $p_l=7$ that $m\le 104, 245, 2353$ according as $k\le 10, 20, 400$,
respectively. This is not possible by Corollary \ref{<250}.

Hence $k>400$ and further $m>10^6$ by Corollary \ref{<250}. By Corollary \ref{n<<},
we may further suppose that $m\geq  v_0\cdot 4k$ where $v_0:=138$.
Since $P(\Delta(m, d, k))\le 4k$, we have $\om(\Delta(m, d, k))\le \pi(4k)-1$.
Taking $t=\pi(4k)-1$ in \eqref{piDk}, we obtain from \eqref{E0} that
\begin{align*}
(v_0\cdot 4k)^{k-\pi(4k)+1}\le (k-1)!2^{-{\rm ord}_2((k-1)!)}=\frac{k!}{k}2^{-{\rm ord}_2((k-1)!)}.
\end{align*}
By using estimates of ord$_p(k-1)!)$ and $k!$ from Lemma \ref{pix}, we obtain
\begin{align*}
(v_0\cdot 4k)^{k-\pi(4k)}<\frac{1}{k(v_0\cdot 4k)}(\frac{k}{e})^k
\left((2\pi k)^{\frac{1}{2}}{\rm exp}(\frac{1}{12k})\right)(2^{-k+2}(k-1)
\end{align*}
or
\begin{align*}
(v_0\cdot 4\cdot e\cdot 2)^k<(v_0\cdot 4k)^{\pi(4k)}
\frac{\left((2\pi)^{\frac{1}{2}}{\rm exp}(\frac{1}{12k})\right)}{v_0\cdot \sqrt{k}}
<(v_0\cdot 4k)^{\pi(4k)}
\end{align*}
since $k>400$. By using estimates of $\pi(4k)$ from Lemma \ref{pix}, we get
\begin{align*}
\log (v_0\cdot 8\cdot e)<\frac{4\log (v_0\cdot 4k)}{\log (4k)}
\left(1+\frac{1.2762}{\log (4k)}\right).
\end{align*}
The right hand side of the above expression is a decreasing function of $k$
and the inequality does not hold at $k=401$. This is a contradiction.
\qed

\section*{Acknowledgments}

A part of this work was done when the second author was visiting Max-Planck Institute for Mathematics
in Bonn during Feburary-April, 2009 and he would like to thank MPIM for the invitation and the
hospitality. We would also like to thank the referee for his comments on an earlier draft of the paper.

\end{document}